\newtheorem{theorem}{Theorem}
\newtheorem{lemma}{Lemma}
\theoremstyle{definition}
\newtheorem{definition}{Definition}
\newtheorem{conjecture}{Conjecture}
\title{Kernels and Small Quasi-Kernels in Digraphs}
\author{Allan van Hulst \\
\small Unaffiliated \\[-0.8ex]
\small The Netherlands \\
\small\tt allanvanhulst@protonmail.com}
\date{\small \copyright\,\,The author. Released under the CC BY license (International 4.0).}
\begin{document}
\maketitle
\begin{abstract}
  A directed graph $D=(V(D),A(D))$ has a kernel if there exists an
  independent set $K\subseteq V(D)$ such that every vertex $v\in V(D)-K$
  has an ingoing arc $u\mathbin{\longrightarrow}v$ for some $u\in K$. There are
  directed graphs that do not have a kernel (e.g. a 3-cycle). A quasi-kernel
  is an independent set $Q$ such that every vertex can be reached in at most
  two steps from $Q$. Every directed graph has a quasi-kernel. A conjecture
  by P.L. Erd\H{o}s and L.A. Sz\'ekely (cf. A. Kostochka, R. Luo, and, S. Shan,
  \href{https://arxiv.org/abs/2001.04003v1}{\color{blue}arxiv:2001.04003v1}, 2020) postulates 
  that every source-free directed graph has a 
  quasi-kernel of size at most $|V(D)|/2$, where source-free refers to every vertex 
  having in-degree at least one. In this note it is shown that every source-free directed graph 
  that has a kernel also has a quasi-kernel of size at most $|V(D)|/2$, by 
  means of an induction proof. In addition, all definitions and proofs in 
  this note are formally verified by means of the Coq proof assistant.
\end{abstract}

In this note, all directed graphs $D=(V(D),A(D))$ are assumed to be finite
and without self-loops. The notation $u\mathbin{\longrightarrow}v$ is used
to denote $(u,v)\in A(D)$. A set $I\subseteq V(D)$ is \emph{independent} if 
there are no two vertices $u,v\in I$ connected by an arc in any direction. A 
\emph{kernel} $K\subseteq V(D)$ is an independent set such that every vertex 
in $V(D)$ can be reached from a vertex in $K$ in at most one step. A 
\emph{quasi-kernel} $Q\subseteq V(D)$ is a weakening of the concept of kernel by 
requiring that every vertex in $V(D)$ can be reached in at most two steps from
a vertex in $Q$. A source in a directed graph is a vertex $u\in V(D)$ having 
only outgoing arcs. A directed graph is therefore said to be \emph{source-free} 
if every vertex has at least one ingoing arc. 

Based on these definitions, it is clear that every kernel is also a quasi-kernel. 
However, there do exist source-free directed graphs that do not have a kernel,
for instance an odd directed cycle. Moreover, there are source-free directed graphs 
that do have a kernel, but none less than or equal to $|V(D)|/2$ in size, as shown 
by the examples in Figure \ref{fig:examples}. It is straightforward to prove 
that every directed graph has a quasi-kernel \cite{chvatal}.

The following conjecture is attributed to P.L. Erd\H{o}s and L.A. Sz\'ekely
(cf. \cite{fete,kostochka,web}). 

\begin{conjecture}
\label{con:es}
Every source-free digraph has a quasi-kernel of size at most $|V(D)|/2$.
\end{conjecture}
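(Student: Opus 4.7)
Since Conjecture~\ref{con:es} is open in full generality, my plan is to target the weaker statement promised in the abstract: every source-free digraph $D$ that admits a kernel has a quasi-kernel of size at most $|V(D)|/2$. I would proceed by induction on $n=|V(D)|$, with the small cases handled by inspection.

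For the inductive step, fix a kernel $K$ and set $L=V(D)\setminus K$. Since every kernel is a quasi-kernel, the case $|K|\le n/2$ is immediate, so the genuine case is $|K|>|L|$. Source-freeness together with the independence of $K$ forces every $k\in K$ to have an in-neighbor in $L$, while the kernel property forces every $\ell\in L$ to have an in-neighbor in $K$. Composing these two facts, each $k\in K$ has a ``2-step ancestor'' $k'\in K$ via some $\ell\in L$, so $k$ is already 2-step dominated by $k'$ whenever $k'\ne k$. This indicates that large portions of $K$ are redundant, and the goal is to discard enough of them to reach the $n/2$ budget.

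The core move I would try is the reduction from $D$ to $D-\{k,\ell\}$ for a suitable arc $\ell\to k$ with $k\in K$ and $\ell\in L$, chosen so that the smaller digraph is again source-free and $K\setminus\{k\}$ is a kernel in it. Applying the inductive hypothesis to $D-\{k,\ell\}$ yields a quasi-kernel $Q'$ of size at most $(n-2)/2$, after which I would verify that either $Q'$ itself or $Q'\cup\{k\}$ is a quasi-kernel of $D$ of size at most $n/2$. The main obstacle is guaranteeing that such a pair $\{k,\ell\}$ actually exists: deleting $k$ may leave some $\ell'\in L$ with no in-neighbor in $K$, destroying the kernel property, while deleting $\ell$ may leave some vertex with no in-neighbors at all, destroying source-freeness. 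I expect most of the work to go into a structural case analysis on the arcs between $K$ and $L$---exploiting $|K|>|L|$ via pigeonhole to locate a digon $k\leftrightarrow\ell$ or a vertex with a private in-neighbor whose deletion is safe---together with a fallback direct construction of $Q\subseteq K$ of size roughly $|L|$ that out-dominates $L$ when no safe reduction is available.
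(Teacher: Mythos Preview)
You are right to target the conditional statement (assuming a kernel exists), but the inductive scheme has obstacles that your sketch names without resolving, and at least one of them is fatal as stated. Even if you locate a pair $\{k,\ell\}$ so that $D-\{k,\ell\}$ is source-free and $K\setminus\{k\}$ is a kernel there, the recovery step breaks: the quasi-kernel $Q'$ of $D-\{k,\ell\}$ produced by the induction hypothesis need not lie inside $K\setminus\{k\}$, so $Q'$ may contain a vertex of $L$ adjacent to $k$, and then $Q'\cup\{k\}$ is not independent. Nor is it clear that $Q'$ alone $2$-dominates the reinserted $k$ and $\ell$. Your ``fallback direct construction of $Q\subseteq K$ of size roughly $|L|$'' is where the real argument would have to live, and nothing in the proposal indicates how to build it.

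The paper sidesteps all of this by never deleting vertices from $D$ and by weakening the invariant carried through the induction. Call $v\notin K$ an \emph{external private out-neighbor} (\textsc{epon}) of $u\in K$ if $u$ is its only in-neighbor from $K$. If every $u\in K$ has an \textsc{epon}, an injection gives $|K|\le|V(D)|/2$ at once. Otherwise some $u\in K$ has no \textsc{epon}; then every out-neighbor of $u$ in $V(D)\setminus K$ is already dominated by $K\setminus\{u\}$, and one checks that $K\setminus\{u\}$ is still an \emph{inward dominated quasi-kernel} of the same $D$ (a quasi-kernel such that every outside in-neighbor of a member is itself dominated from the set). This weaker property---not ``kernel''---is the inductive invariant, and the induction parameter is the number of vertices of the current set lacking an \textsc{epon}, not $|V(D)|$. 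Since $D$ never changes, source-freeness is never endangered, and since one only shrinks $K$, independence is automatic.
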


\begin{figure}
\begin{center}
\includegraphics{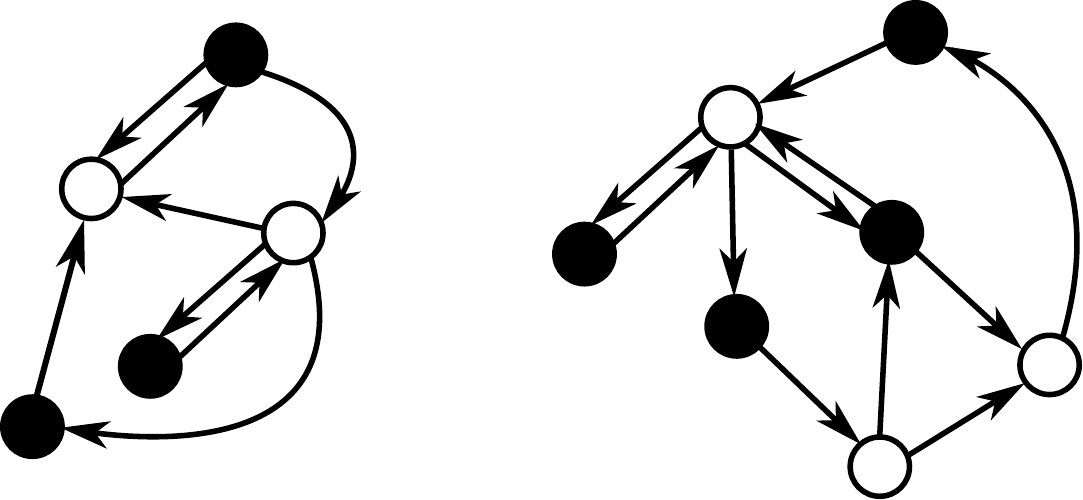}
\end{center}
\caption{Two directed source-free graphs that do not possess a kernel of size 
less than or equal to $|V(D)|/2$. The black vertices indicate smallest possible 
kernels in both examples.}
\label{fig:examples}
\end{figure}

The purpose of this note is to prove Conjecture \ref{con:es} in the presence
of a kernel, by means of an induction proof of moderate technicality. A formal
verification of this proof by means of the Coq proof assistant is also supplied 
with this paper\footnote{This proof has been checked using version 8.4pl4 of the
Coq proof assistant, and is not guaranteed to work in earlier or later versions
of Coq.}.

A kernel in a directed graph is somewhat related to the concept of an 
independent dominating set in the context of undirected graphs \cite{goddard}, 
and some general terminology in this note is inherited from the theory of domination
in undirected graphs \cite{cockayne}.

A digraph has a kernel if and only if it does not have a directed odd
cycle \cite{richardson}. Existence of a quasi-kernel of size at most
$|V(D)|/2$ is guaranteed in case $D$ is a tournament, semicomplete multipartite,
or locally semicomplete \cite{heard}, or an orientation of a graph with
chromatic number at most four \cite{kostochka}. 

The two definitions below are followed by a brief outline of
the solution proposed in this note. It is assumed that $D$ is a directed
graph in these two definitions.

\begin{definition}
\label{def:epon}
For $S\subseteq V(D)$ and $u\in S$, an out-neighbor $v\in V(D)-S$ of $u$ is
said to be an \textit{external private out-neighbor} (\textsc{epon}) with 
regard to $S$, if for all $w\in S$ such that $w\mathbin{\longrightarrow}v$ it holds 
that $u=w$.
\end{definition}

As an example, the right-most vertex in Figure \ref{fig:examples} is an
\textsc{epon} with regard to the black vertices. The directed graph on the 
left does not have an \textsc{epon}.

\begin{definition}
\label{def:in_dom}
A quasi-kernel $Q\subseteq V(D)$ is \textit{inward dominated} if for all
$w\in Q$ and $v\in V(D)-Q$ such that $v\mathbin{\longrightarrow}w$, there exists a
$u\in Q$ such that $u\mathbin{\longrightarrow}v$.
\end{definition}

The induction proof in Theorem \ref{thm:main} is outlined briefly here.
First, Lemma \ref{lem:in_dom} is used to derive existence of an inward
dominated quasi-kernel $K$, based on the assumption of a kernel being
present. If every vertex $u\in K$ has an \textsc{epon}, then the conclusion 
$|K|\leq|V(D)|/2$ follows directly from Lemma \ref{lem:epon_leq}. If there 
is some vertex $u\in K$ that does not have an \textsc{epon}, then the vertex 
can be removed, and this process clearly terminates in finitely many steps. 
The technicality of this solution lies entirely in proving that the premisses 
for the induction hypothesis are satisfied. In particular, it is not entirely 
trivial to show that $K-\{u\}$ is again an inward dominated quasi-kernel.

Theorem \ref{thm:main} is supported by the three lemmas below. These are
proved here in detailed form, in order to achieve closer resemblance to
the way they are formalized in the Coq proof. It is assumed that $D$ is a 
directed graph in these three lemmas.

\begin{lemma}
\label{lem:in_dom}
If $K\subseteq V(D)$ is a kernel then $K$ is an inward dominated quasi-kernel.
\end{lemma}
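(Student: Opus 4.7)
The plan is to unfold the two parts of the conclusion separately: first check that $K$ really is a quasi-kernel, then verify the inward domination property. Both parts will follow directly from the defining conditions of a kernel, so I do not expect any technical obstacle here; the difficulty of the note lies later, in Theorem~\ref{thm:main}.

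For the quasi-kernel part, $K$ is independent by definition, so it remains to show that every $v \in V(D)$ is reachable from $K$ in at most two steps. If $v \in K$ this is trivial (zero steps), and if $v \in V(D)-K$ then the kernel property gives a $u\in K$ with $u\mathbin{\longrightarrow}v$, so $v$ is reached in one step. In both cases the two-step bound holds, and $K$ is a quasi-kernel.

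For the inward domination part, I would take arbitrary $w \in K$ and $v \in V(D) - K$ with $v \mathbin{\longrightarrow} w$ and produce the required $u \in K$ with $u \mathbin{\longrightarrow} v$. But since $v \in V(D)-K$, the defining property of a kernel applied to $v$ yields precisely such a $u \in K$ with $u \mathbin{\longrightarrow} v$. The arc $v \mathbin{\longrightarrow} w$ in the hypothesis is, in fact, not even used; it appears only in Definition~\ref{def:in_dom} as a quantification condition that here is automatically dischargeable.

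The only subtlety worth flagging is administrative: making sure that the reachability condition in the quasi-kernel definition is interpreted in the direction matching Definition~\ref{def:in_dom} (out-neighbors from $K$), and that the one-step reachability in the kernel definition is the same direction. Once the directions line up, the lemma is essentially an unfolding of definitions, and this is likely how the Coq formalization proceeds as well.
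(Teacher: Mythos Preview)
Your argument is correct and follows exactly the same approach as the paper's proof: the quasi-kernel property is declared immediate (you spell out the two cases, the paper simply says ``immediate from the definition''), and the inward domination is obtained by applying the kernel property to the vertex $v\in V(D)-K$ to produce $u\in K$ with $u\mathbin{\longrightarrow}v$. Your observation that the hypothesis $v\mathbin{\longrightarrow}w$ is never actually used is accurate and not mentioned in the paper, but otherwise the two proofs are essentially identical.
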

\begin{proof}
From the definition it is immediate that $K$ is a quasi-kernel. Assume
that $w\in K$ and $v\in V(D)-K$ such that $v\mathbin{\longrightarrow}w$. Then,
since $K$ is a kernel, there exists a $u\in K$ such that 
$u\mathbin{\longrightarrow}v$. Lemma \ref{lem:in_dom} is formalized as
\texttt{Lemma kernel\_qkernel} and \texttt{Lemma kernel\_in\_dom} in
the Coq proof.
\end{proof}

\begin{lemma}
\label{lem:epon_incl}
If $u$ has an \textsc{epon} with regard to $T\subseteq V(D)$ and
$S\subseteq T$ such that $u\in S$ then $u$ has an \textsc{epon} with 
regard to $S$.
\end{lemma}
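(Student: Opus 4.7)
The plan is to show that the very same vertex $v$ that witnesses $u$ having an \textsc{epon} with regard to $T$ also witnesses $u$ having an \textsc{epon} with regard to $S$. So I will fix such a $v\in V(D)-T$ with $u\mathbin{\longrightarrow}v$ and the property that every $w\in T$ with $w\mathbin{\longrightarrow}v$ satisfies $u=w$, and verify the three clauses of Definition \ref{def:epon} for $v$ relative to $S$.

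The first clause, $v\in V(D)-S$, is the only place where the inclusion $S\subseteq T$ is used directly: from $v\notin T$ and $S\subseteq T$ we immediately get $v\notin S$. The second clause, $u\mathbin{\longrightarrow}v$, is verbatim the same arc as before and uses nothing new (the hypothesis $u\in S$ is what allows us to speak of $v$ as an out-neighbor of a member of $S$ in the first place). For the third clause, suppose $w\in S$ with $w\mathbin{\longrightarrow}v$; then $w\in T$ by $S\subseteq T$, and the \textsc{epon} property with regard to $T$ forces $u=w$.

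There is no real obstacle here; the lemma is essentially a monotonicity statement, saying that the \textsc{epon} witness persists when the ambient set shrinks, provided $u$ remains inside it. The only mild subtlety worth flagging for the Coq formalization is that the ``non-membership'' side of the \textsc{epon} condition transfers along $V(D)-T\subseteq V(D)-S$ while the ``uniqueness'' side transfers along $S\subseteq T$, i.e.\ the two halves of the definition use the inclusion in opposite directions, but both steps are one-liners.
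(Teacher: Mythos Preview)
Your proof is correct and follows essentially the same approach as the paper: both take the witness $v\in V(D)-T$ and show it is an \textsc{epon} of $u$ with regard to $S$, using $S\subseteq T$ to push the uniqueness clause through. The only cosmetic difference is that the paper phrases the uniqueness check by contradiction and leaves the $v\in V(D)-S$ check implicit, whereas you verify all three clauses directly.
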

\begin{proof}
Suppose that $u\in S$ has an \textsc{epon} $v\in V(D)-T$ with regard to 
$T$ and assume towards a contradiction that there exists a $w\in S$ such 
that $w\not=u$ and $w\mathbin{\longrightarrow}v$. Then clearly $w\in T$ and therefore
$v$ cannot be an \textsc{epon} with regard to $T$. Lemma \ref{lem:epon_incl}
is encoded as \texttt{Lemma has\_epon\_incl} in the Coq proof.
\end{proof}

\begin{lemma}
\label{lem:epon_leq}
If all vertices in $S\subseteq V(D)$ have an \textsc{epon} with regard 
to $S$, then $|S|\leq |V(D)|/2$.
\end{lemma}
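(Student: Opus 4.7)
The plan is to construct an injection $f : S \to V(D) \setminus S$ by sending each $u \in S$ to one of its \textsc{epon}s with regard to $S$, and then deduce the cardinality bound from the disjointness of $S$ and $V(D) \setminus S$.

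Concretely, I would first invoke the hypothesis that every $u \in S$ has at least one \textsc{epon} with regard to $S$, and pick such a vertex $f(u) \in V(D) \setminus S$ for each $u \in S$ (this is a routine finite choice and is the analogue of the usual constructive witness extraction in the Coq formalization). By Definition~\ref{def:epon}, $f(u)$ is an out-neighbor of $u$, lies in $V(D) \setminus S$, and has the property that $u$ is the only vertex of $S$ with an arc into $f(u)$.

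The key step is injectivity of $f$. Suppose $u_1, u_2 \in S$ satisfy $f(u_1) = f(u_2) = v$. Then $v \in V(D) \setminus S$ and both $u_1 \mathbin{\longrightarrow} v$ and $u_2 \mathbin{\longrightarrow} v$ hold. Applying the \textsc{epon} property of $v$ with respect to $u_1$ to the vertex $u_2 \in S$, we obtain $u_2 = u_1$. Hence $f$ is injective from $S$ into the disjoint set $V(D) \setminus S$, which gives $|S| \leq |V(D)| - |S|$ and therefore $|S| \leq |V(D)|/2$.

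The argument is essentially a pigeonhole with a small technical wrinkle: nothing deep happens, but one must be careful that the chosen \textsc{epon}s really do land in $V(D) \setminus S$ (so they cannot be confused with elements of $S$ when counting) and that injectivity is extracted exactly from the quantifier structure of Definition~\ref{def:epon}. I expect this last bookkeeping — matching the ``for all $w \in S$'' clause in the definition with the assumption $u_2 \in S$ — to be the only point where the Coq formalization requires explicit care.
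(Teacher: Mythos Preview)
Your proof is correct and follows essentially the same approach as the paper's: both exhibit an injective correspondence from $S$ into the disjoint set of \textsc{epon}s in $V(D)\setminus S$ and then conclude $2|S|\leq |V(D)|$. The only cosmetic difference is that the paper phrases this via a total injective relation $R\subseteq S\times T$ rather than a choice function $f$, thereby avoiding the explicit witness-selection step you mention.
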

\begin{proof}
In general, say that $R\subseteq X\times Y$ is a binary total injective relation if
\begin{enumerate}[(1)]
\item for all $x\in X$ there exists a $y\in Y$ such that $R\,(x,y)$, and
\item for all $x,x'\in X$ and $y\in Y$ such that $R\,(x,y)$ and $R\,(x',y)$ it
      holds that $x=x'$.
\end{enumerate}
Clearly, $|X|\leq |Y|$ holds here due to injectivity. 

Say that $T\subseteq V(D)$ contains the vertices that are an \textsc{epon} 
with regard to the set $S$, as given in the statement of this lemma. Define 
$R\subseteq
S\times T$ as
\begin{center}
\begin{math}
R=\{(u,v)\in S\times T\mid 
  v\,\,\textrm{is an \textsc{epon} of}\,\,u\,\,\textrm{with regard to}\,\,S\}
\end{math}
\end{center}
\noindent and observe that $R$ is a binary total injective relation. It then 
follows that $|S|\leq |T|$ and hence, as $S$ and $T$ are disjoint, 
$|S|+|T|\leq |V(D)|$ and thus $2|S|\leq |V(D)|$. Lemma \ref{lem:epon_leq}
is encoded as \texttt{Lemma all\_epon\_half\_size} (using \texttt{Lemma inj\_leq})
in the Coq proof.
\end{proof}

Conjecture \ref{con:es}, in case a kernel is present, is proved in 
Theorem \ref{thm:main}. This part of the proof is formalized as 
\texttt{Theorem main} at the end of the Coq code.

\begin{theorem}
\label{thm:main}
If a source-free digraph $D$ has a kernel, then $D$ has a quasi-kernel 
of size at most $|V(D)|/2$.
\end{theorem}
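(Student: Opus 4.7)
The plan is to run a descent argument on the size of an inward dominated quasi-kernel. Lemma \ref{lem:in_dom} immediately converts the given kernel into such a set, so it suffices to prove, by induction on $|K|$, the following strengthening: every source-free digraph admitting an inward dominated quasi-kernel $K$ has a quasi-kernel of size at most $|V(D)|/2$.

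The base case $|K|=0$ forces $V(D)=\emptyset$, since no vertex can be reached in two steps from the empty set, and the conclusion is trivial. For the inductive step I split on whether every $u\in K$ has an \textsc{epon} with regard to $K$. If yes, Lemma \ref{lem:epon_leq} yields $|K|\leq|V(D)|/2$, so $K$ itself is the quasi-kernel we need. If no, pick some $u\in K$ with no \textsc{epon}, set $K'=K-\{u\}$, and aim to apply the induction hypothesis to $K'$, which requires verifying that $K'$ is again an inward dominated quasi-kernel of $D$.

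The routine parts of this verification all hinge on the same substitution trick. Independence of $K'$ is inherited. For any $v\in V(D)-K$, the kernel property of $K$ supplies a $w\in K$ with $w\mathbin{\longrightarrow}v$; if $w=u$, the failure of the \textsc{epon} property applied to the out-neighbor $v$ of $u$ produces a replacement $w'\in K-\{u\}$ with $w'\mathbin{\longrightarrow}v$. Inward domination of $K'$ is established by the same swap: if $v\in V(D)-K'$ points into some $w\in K'$, independence of $K$ forces $v\in V(D)-K$ (otherwise $v=u$ and $u\mathbin{\longrightarrow}w$ violates independence of $K$), inward domination of $K$ gives a witness $y\in K$ with $y\mathbin{\longrightarrow}v$, and if $y=u$ we again fall back on the no-\textsc{epon} property to replace $y$ by some $y'\in K-\{u\}$.

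The delicate point, and in my view the main obstacle, is showing that the removed vertex $u$ is itself reached in at most two steps from $K'$. Source-freeness furnishes an in-neighbor $x$ of $u$, and independence of $K$ forces $x\in V(D)-K$. Inward domination of $K$ then yields some $y\in K$ with $y\mathbin{\longrightarrow}x$; when $y\neq u$ the path $y\mathbin{\longrightarrow}x\mathbin{\longrightarrow}u$ is what we want, whereas when $y=u$ the edge $u\mathbin{\longrightarrow}x$ together with $x\in V(D)-K$ lets the no-\textsc{epon} assumption produce an alternative $y'\in K-\{u\}$ with $y'\mathbin{\longrightarrow}x$, closing the argument. This is precisely the spot where all three hypotheses on $D$ (source-freeness, the inward domination of $K$, and the failure of \textsc{epon} for $u$) interact, and it is the reason the proof requires genuine graph-theoretic input rather than a purely set-theoretic counting argument.
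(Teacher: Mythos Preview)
Your overall strategy matches the paper's: start from Lemma~\ref{lem:in_dom}, then repeatedly delete from the inward dominated quasi-kernel a vertex lacking an \textsc{epon} until none remain, at which point Lemma~\ref{lem:epon_leq} finishes. The paper inducts on the number of \textsc{epon}-less vertices of $K$ and invokes Lemma~\ref{lem:epon_incl} to show this count drops; your induction on $|K|$ is in fact a bit cleaner, since $|K'|<|K|$ is immediate and Lemma~\ref{lem:epon_incl} is never needed. The verification that $K-\{u\}$ is inward dominated, and the ``delicate'' argument that $u$ itself is two-step reachable from $K-\{u\}$, are correct and coincide with the paper's reasoning.

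There is, however, a real slip in your verification that $K'$ is a quasi-kernel. You write that for $v\in V(D)-K$ ``the kernel property of $K$ supplies a $w\in K$ with $w\mathbin{\longrightarrow}v$''. But in your inductive statement $K$ is only an inward dominated \emph{quasi}-kernel, not a kernel, so a priori $v$ may only be reachable via a two-step path $w\mathbin{\longrightarrow}x\mathbin{\longrightarrow}v$ with $w\in K$ and $x\notin K$; this sub-case is absent from your argument. The paper treats it explicitly: if $w=u$, the no-\textsc{epon} assumption applied to the out-neighbor $x$ of $u$ produces a replacement $w'\in K-\{u\}$ with $w'\mathbin{\longrightarrow}x\mathbin{\longrightarrow}v$. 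Once you add this one sub-case, your proof is complete.
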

\begin{proof}
Assume $D$ is a source-free directed graph that has a kernel $K$. From
Lemma \ref{lem:in_dom} it is clear that $K$ is also an inward dominated
quasi-kernel. By induction, the following will be shown: if $K$ is an
inward dominated quasi-kernel, then there exists a quasi-kernel of size
at most $|V(D)|/2$. For this purpose, define $S\subseteq V(D)$ as follows:
\begin{center}
\begin{math}
S = \{u\in K\mid u\,\,\textrm{does not have an \textsc{epon} with regard to}\,\,K\},
\end{math}
\end{center}
\noindent and apply induction towards $|S|$, thereby generalizing over all 
other variables. 

If $|S|=0$ then every vertex in $K$ has an \textsc{epon} and by Lemma 
\ref{lem:epon_leq} it then follows that $|K|\leq |V(D)|/2$.

Assume that $|S|>0$ and assume that there exists a vertex $u\in S$ 
such that $u$ does not have an \textsc{epon} with regard to $S$ in $D$.
Now define $R\subseteq K-\{u\}$ as follows:
\begin{center}
\begin{math}
R = \{v\in K-\{u\}\mid v\,\,\textrm{does not have an \textsc{epon} with regard to}\,\,K-\{u\}\}.
\end{math}
\end{center}

Now, three premisses are required to be able to apply the induction
hypothesis and thereby complete the proof: (1) $|R|<|S|$, (2) $K-\{u\}$ 
is a quasi-kernel, and, (3) $K-\{u\}$ is inward dominated. These will
be proved here one by one.
\begin{enumerate}[(1)]
\item Clearly, it holds that $u\in S$ and $u\not\in R$, and therefore
      it suffices to show that $R\subseteq S$. Assume that $v\in R$ such
      that $v$ does not have an \textsc{epon} with regard to $K-\{u\}$.
      Then, by contraposition of Lemma \ref{lem:epon_incl}, $v$ cannot
      have an \textsc{epon} with regard to $K$, hence $v\in S$.
\item It is immediate that $K-\{u\}$ is also independent. Assume that
      $v\in V(D)$ and distinguish between the following cases.
      \begin{itemize}
      \item If $u=v$ then, as $D$ is source-free, there must exist a vertex  
            $u'\in V(D)$ such that $u'\mathbin{\longrightarrow}u$ and $u'\not\in K$. As $K$ is an
            inward dominated quasi-kernel, there must exist a vertex $w\in K$
            such that $w\mathbin{\longrightarrow}u'$. Clearly, only the case $w=u$
            is relevant. Assume there does not exist some alternative vertex 
            $w'\in K$ such that $w'\mathbin{\longrightarrow}u'$. Then, $u'$ is an
            \textsc{epon} of $u$ with regard to $K$, thereby contradicting
            the assumption $u\in S$.
      \item Now consider the case $u\not=v$, and distinguish between the
            cases following from the assumption that $K$ is a quasi-kernel.
            \begin{itemize}
            \item If $v\in K$ then $v\in K-\{u\}$. 
            \item Now suppose that $v\not\in K$ and there exists a vertex $w\in K$ 
                  such that $w\mathbin{\longrightarrow}v$. If $w=u$ then either there
                  exists an alternative vertex $w'\in K$ such that $w'\mathbin{\longrightarrow}v$,
                  or $v$ is an \textsc{epon} of $u$ with regard to $K$.
            \item For the final case corresponding to $v\not\in K$, assume there exist 
                  vertices $x\in K$ and $w\not\in K$ such that $x\mathbin{\longrightarrow}w$ and 
                  $w\mathbin{\longrightarrow}v$. If $x=u$ then either there exists an alternative 
                  vertex $x'\in K$ such that $x'\mathbin{\longrightarrow}w$, or $w$ is an 
                  \textsc{epon} of $u$ with regard to $K$.
            \end{itemize}
      \end{itemize}
\item Assume that $w\in V(D)-(K-\{u\})$ and $v\in K-\{u\}$ such that
      $w\mathbin{\longrightarrow}v$. Since $K$ is inward-dominated, there must
      exist some vertex $x\in K$ such that $x\mathbin{\longrightarrow}w$. Now
      assume $x=u$. If there does not exist some alternative vertex $y\in K$
      such that $y\mathbin{\longrightarrow}w$, then $w$ is an \textsc{epon} of $u$
      with regard to $K$, contradicting $u\in S$.
\end{enumerate}
\end{proof}

Conjecture \ref{con:es} remains open if there is no kernel present. Results
concerning the presence of multiple distinct quasi-kernels in this situation
are known \cite{jacob, gutin}. However, these distinct quasi-kernels are
not necessarily disjoint. The reasonable complexity of the proof in this
note leads to the suggestion that Conjecture \ref{con:es}, in its 
unconditional form, is perhaps also accessible via elementary methods.

\end{document}